\newtheorem{theorem}{Theorem}[section]
\newtheorem{claim}[theorem]{Claim}
\newtheorem{subclaim}[theorem]{Subclaim}
\newtheorem{corollary}[theorem]{Corollary}
\newtheorem{lemma}[theorem]{Lemma}
\newtheorem{problem}[theorem]{Problem}
\theoremstyle{definition}
\newtheorem{remark}[theorem]{Remark}
\newtheorem{definition}[theorem]{Definition}
\newtheorem{example}[theorem]{Example}
\newcommand{\IN}{\mathbb N}
\newcommand{\C}{\mathcal C}
\newcommand{\w}{\omega}
\newcommand{\Ra}{\Rightarrow}
\newcommand{\Haus}{\mathsf{T_{\!2}S}}
\newcommand{\Zero}{\mathsf{T_{\!z}S}}
\newcommand{\korin}[2]{\!\sqrt[#1]{\!#2}}
\newcommand*{\defeq}{\stackrel{\mathsmaller{\mathsf{def}}}{=}}
\title{Categorically closed unipotent semigroups}
\author{Taras Banakh and Myroslava Vovk}
\address{T.Banakh: Ivan Franko National University of Lviv (Ukraine) and Jan Kochanowski University in Kielce (Poland)}
\email{t.o.banakh@gmail.com}
\address{M.Vovk: Department of Advanced Mathematics, Lviv Polytechnic National University, Lviv 79013,Ukraine}
\email{mira.i.kopych@gmail.com}
\subjclass[2020]{22A15, 20M14, 54B30, 54D35, 54H11, 54H12}
\keywords{$\C$-closed semigroup, bounded semigroup}
\begin{document}
\begin{abstract} Let $\C$ be a class of $T_1$ topological semigroups, containing all Hausdorff zero-dimensional topological semigroups. A semigroup $X$ is 
{\em $\C$-closed} if $X$ is closed in any topological semigroup $Y\in\C$ that contains $X$ as a discrete subsemigroup; $X$ is {\em injectively $\C$-closed} if for any (injective) homomorphism $h:X\to Y$ to a topological semigroup $Y\in\C$, the image $h[X]$ is closed in $Y$. A semigroup $X$ is {\em unipotent} if it contains a unique idempotent. We prove that a unipotent commutative semigroup $X$ is (injectively) $\C$-closed if and only if $X$ is bounded, nonsingular (and group-finite). This characterization implies that for every injectively $\C$-closed unipotent semigroup $X$, the center $Z(X)$ is injectively $\C$-closed. 
\end{abstract}
\maketitle

\section{Introduction and Main Results}

In many cases,  completeness properties of various objects of General Topology or  Topological Algebra can be characterized externally as closedness in ambient objects. For example, a metric space $X$ is complete if and only if $X$ is closed in any metric space containing $X$ as a subspace. A uniform space $X$ is complete if and only if $X$ is closed in any uniform space containing $X$ as a uniform subspace. A topological group $G$ is Ra\u\i kov complete  if and only if it is closed in any topological group containing $G$ as a subgroup.

On the other hand, for topological semigroups there are no reasonable notions of (inner) completeness. Nonetheless we can define many completeness properties of semigroups via their closedness in ambient topological semigroups.

A {\em topological semigroup} is a topological space $X$ endowed
with a continuous associative binary operation $X\times X\to
X$, $(x,y)\mapsto xy$.

\begin{definition} Let $\C$ be a class of topological semigroups.
A topological
semigroup $X$ is called
\begin{itemize}
\item {\em $\C$-closed} if for any isomorphic topological
embedding $h:X\to Y$ to a topological semigroup $Y\in\C$
the image $h[X]$ is closed in $Y$;
\item {\em injectively $\C$-closed} if for any injective continuous homomorphism $h:X\to Y$ to a topological semigroup $Y\in\C$ the image $h[X]$ is closed in $Y$;
\item {\em absolutely $\C$-closed} if for any continuous homomorphism $h:X\to Y$ to a topological semigroup $Y\in\C$ the image $h[X]$ is closed in $Y$.
\end{itemize}
\end{definition}

For any topological semigroup we have the implications:
$$\mbox{absolutely $\C$-closed $\Ra$ injectively $\C$-closed $\Ra$ $\C$-closed}.$$

\begin{definition} A semigroup $X$ is defined to be ({\em injectively, absolutely}) {\em $\C$-closed\/} if so is $X$ endowed with the discrete topology.
\end{definition}

In this paper we will be interested in the (absolute, injective) $\C$-closedness for the classes:
\begin{itemize}
\item $\mathsf{T_{\!1}S}$ of topological semigroups satisfying the separation axiom $T_1$;
\item $\Haus$ of Hausdorff topological semigroups;
\item $\Zero$ of Hausdorff zero-dimensional topological
semigroups.
\end{itemize}
A topological space satisfies the separation axiom $T_1$ if all its finite subsets are closed.
A topological space is {\em zero-dimensional} if it has a base of
the topology consisting of {\em clopen} (=~closed-and-open) sets.

Since $\Zero\subseteq\Haus\subseteq\mathsf{T_{\!1}S}$, for every semigroup we have the implications:
$$
\xymatrix{
\mbox{absolutely $\mathsf{T_{\!1}S}$-closed}\ar@{=>}[r]\ar@{=>}[d]&\mbox{absolutely $\mathsf{T_{\!2}S}$-closed}\ar@{=>}[r]\ar@{=>}[d]&\mbox{absolutely $\mathsf{T_{\!z}S}$-closed}\ar@{=>}[d]\\
\mbox{injectively $\mathsf{T_{\!1}S}$-closed}\ar@{=>}[r]\ar@{=>}[d]&\mbox{injectively $\mathsf{T_{\!2}S}$-closed}\ar@{=>}[r]\ar@{=>}[d]&\mbox{injectively $\mathsf{T_{\!z}S}$-closed}\ar@{=>}[d]\\
\mbox{$\mathsf{T_{\!1}S}$-closed}\ar@{=>}[r]&\mbox{$\mathsf{T_{\!2}S}$-closed}\ar@{=>}[r]&\mbox{$\mathsf{T_{\!z}S}$-closed.}
}
$$
From now on we assume that $\C$ is a class of topological semigroups such that $$\mathsf{T_{\!z}S}\subseteq\C\subseteq\mathsf{T_{\!1}S}.$$
Semigroups having one of the above closedness properties are called categorically closed. Categorically closed topological groups were investigated in
~\cite{AC,AC1,Ban,DU,G,Z1,KOO,L,Z2}. 
%Corresponding notions of completeness in Category Theory were investigated in~\cite{CDT,Er,FG,G1,GH,LW}. 
This paper is a continuation of the papers \cite{Ban}, \cite{BBm}, \cite{CCCS}, \cite{BB2}, \cite{ACS}  containing inner characterizations of semigroups possessing various categorically closed properties.

In this paper we shall characterize (absolutely and injectively) $\C$-closed {\em unipotent} semigroups. 

A semigroup $X$ is called
\begin{itemize}
\item {\em unipotent} if $X$ contains a unique idempotent;
\item {\em chain-finite} if any infinite set $I\subseteq X$ contains elements $x,y\in I$ such that $xy\notin\{x,y\}$;
\item {\em singular} if there exists an infinite set $A\subseteq X$ such that $AA$ is a singleton;
\item {\em periodic} if for every $x\in X$ there exists $n\in\IN$ such that $x^n$ is an idempotent;
\item {\em bounded} if there exists $n\in\IN$ such that for every $x\in X$ the $n$-th power $x^n$ is an idempotent;
\item {\em group-finite} if every subgroup of $X$ is finite;
\item {\em group-bounded} if every subgroup of $X$ is bounded.
\end{itemize}

The following characterization of $\C$-closed commutative semigroups was proved in the paper \cite{CCCS}.

\begin{theorem}[Banakh--Bardyla]\label{t:C-closed} A commutative semigroup is $\C$-closed if and only if it is chain-finite, periodic, nonsingular and group-bounded.
\end{theorem}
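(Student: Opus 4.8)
The plan is to prove both implications by contraposition, using for the necessity (``$\C$-closed $\Ra$ four properties'') explicit ambient semigroups, and for the sufficiency a single accumulation-point argument. I would first isolate one reusable mechanism for necessity. Given a free filter $\F$ on $X$, form $Y=X\sqcup\{\infty\}$, declare every point of $X$ isolated, let the neighbourhoods of $\infty$ be the sets $\{\infty\}\cup F$ with $F\in\F$, and extend the operation by making $\infty$ a two-sided zero. A direct check shows $Y$ is a zero-dimensional Hausdorff topological semigroup (hence a member of $\Zero\subseteq\C$) precisely when $\F$ satisfies $L_a^{-1}[F]:=\{x\in X: ax\in F\}\in\F$ for all $a\in X,\ F\in\F$ (continuity of the translations at $\infty$) and $\forall F\in\F\,\exists G\in\F\colon GG\subseteq F$ (continuity at $(\infty,\infty)$). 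Since $\F$ is free, $\infty\in\overline X\setminus X$, so $X$ is not closed in $Y$. The gain is that a \emph{decreasing chain of ideals} $I_1\supseteq I_2\supseteq\cdots$ with $\bigcap_k I_k=\varnothing$ automatically generates such a filter: $L_a^{-1}[I_k]\supseteq I_k$ because $aI_k\subseteq I_k$, and $I_kI_k\subseteq I_k$ gives the square-root condition.

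The task then becomes manufacturing such a free invariant filter from each failed property. If $X$ is not periodic, some $x$ generates a copy of $(\IN,+)$ and the principal ideals $I_k=X^1x^k$ form a decreasing chain whose emptiness of intersection yields the witness, with $(x^k)\to\infty$. If $X$ is not chain-finite, then, using commutativity, one extracts an infinite chain of idempotents and feeds a monotone subsequence into the ideals $e_kX$. The singular case supplies an infinite set $A$ with $AA$ a singleton, i.e. an infinite \emph{null} subsemigroup whose elements can be collapsed to a common limit; and failure of group-boundedness supplies a subgroup of unbounded exponent, which carries a non-discrete precompact group topology whose completion adjoins the limit point. I expect the verification that these \emph{local} witnesses extend to a \emph{globally} continuous operation on all of $X$ (equivalently, that the attached filter is genuinely translation-invariant and free) to be the fussiest part of necessity.

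For sufficiency, assume $X$ is chain-finite, periodic, nonsingular and group-bounded, let $h\colon X\hookrightarrow Y\in\mathsf{T_{\!1}S}$ be a discrete topological embedding, identify $X=h[X]$, and suppose toward a contradiction that some $y\in\overline X\setminus X$. The closure $\overline X$ is again a commutative $T_1$ topological semigroup in which every point of $X$ is isolated. Fixing a net (or ultrafilter) $(x_i)\to y$, I would analyse the limits of the algebraic data attached to it: the idempotent powers $e_i=x_i^{n_i}$ coming from periodicity, the products $x_ix_j$, and the translates $x_iy$. Using the structure of a commutative periodic semigroup as a semilattice of archimedean components $X=\bigsqcup_e X_e$ with maximal subgroups $H_e$, the ways $(x_i)$ can disperse are limited: it either meets infinitely many idempotent components (forcing an infinite chain in the semilattice of idempotents, against chain-finiteness), or accumulates inside one group $H_e$ of necessarily unbounded exponent (against group-boundedness), or its products collapse to a single value (against nonsingularity), or it climbs a free monogenic tower (against periodicity). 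Each alternative contradicts a hypothesis, so no such $y$ exists and $h[X]$ is closed.

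The main obstacle, in my view, is exactly this last step: converting the topological statement ``$y$ is an accumulation point of $X$'' into the algebraic dichotomy above. The difficulty is that the semilattice (idempotent) direction and the group (torsion) direction can interact, and a single net may mix them; chain-finiteness and group-boundedness forbid the two ``pure'' divergences, while nonsingularity and periodicity kill the null and free directions, so the real work is a Ramsey-type normalisation of the net that separates these behaviours before the structure theory can be applied. On the necessity side the analogous crux is securing freeness (empty intersection of the ideal chain) and translation-invariance simultaneously, which is what makes the four constructions genuinely different rather than instances of a single formula.
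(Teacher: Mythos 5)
You should first be aware that the paper you are being compared against does not actually prove this statement: Theorem~\ref{t:C-closed} is quoted from \cite{CCCS}, where its proof is the main content of a long paper. Your proposal therefore has to stand on its own, and it does not, because several of its concrete steps fail. On the necessity side, your unified mechanism --- a free, translation-invariant filter generated by a decreasing chain of ideals, with $\infty$ adjoined as a two-sided zero --- cannot treat the group-like directions at all, since a group has no proper ideals. Concretely, for the periodicity case you take $I_k=X^1x^k$, but for $X=\IZ$ and $x=1$ every such ideal is all of $\IZ$, so $\bigcap_{k}I_k\neq\emptyset$, the filter is not free, and no witness is produced; yet $\IZ$ is exactly the kind of semigroup that must be shown non-$\C$-closed in this case. (Even in the chain-finite case the intersection $\bigcap_k e_kX^1$ contains $0$ whenever $X$ has a zero, so freeness is not automatic there either.) For group-boundedness your proposed witness fails for a different reason: $\C$-closedness quantifies over topological embeddings of the \emph{discrete} semigroup, and a subgroup that is dense in the completion of a non-discrete precompact group topology is not discrete --- indeed a discrete subgroup of a Hausdorff topological group is always closed, which is exactly why the ambient witness in \cite{CCCS} has to be a semigroup that is not a group, built by a genuinely different construction. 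The singular case also escapes your mechanism: if $AA=\{z\}$, the filter of cofinite subsets of $A$ is not translation-invariant, since for $a\in A$ and any infinite $F\subseteq A$ one has $aF=\{z\}$, while $z$ need not lie in the prescribed neighbourhood $\{\infty\}\cup(A\setminus\{z\})$; so $\infty$ cannot be a two-sided zero here.

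On the sufficiency side there is one outright error and one missing proof. The error: you assert that $\overline{X}$ is again a \emph{commutative} $T_1$ topological semigroup, but commutativity of a closure is a Hausdorff phenomenon --- two continuous maps agreeing on a dense set need only coincide when the codomain is Hausdorff --- and here $\C$ may be as large as $\mathsf{T_{\!1}S}$. (Note how Lemma~\ref{l:iC-single} of the present paper is written so as never to use commutativity of $Y$, uniqueness of limits, or any structure theory of $\overline{X}$; this is precisely to avoid that trap.) The missing proof: the entire content of the implication is the step you yourself flag as the main obstacle, namely converting ``$y\in\overline{X}\setminus X$'' into one of four pure divergences (semilattice, group, null, free). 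Asserting that every accumulating net must fall into one of these cases is essentially a restatement of the theorem, not an argument; in \cite{CCCS} this is not a single dichotomy but a chain of lemmas combining algebraic facts such as $\korin{\infty}{H_e}\cdot H_e\subseteq H_e$, pigeonhole refinements of the kind appearing in Lemma~\ref{l:bounded-e} of this paper, and topological arguments of the kind appearing in Claims~\ref{l:Pe-ideal2}--\ref{l:closed2}. As it stands, your text is a reasonable research plan, but both of its crux points --- the group-theoretic witnesses for necessity and the normalisation of nets for sufficiency --- are exactly the parts that are wrong or absent.
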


For unipotent semigroups, this characterization can be simplified as follows.

\begin{theorem}\label{t:main1} A unipotent semigroup $X$ is $\C$-closed if and only if $X$ is bounded and nonsingular.
\end{theorem}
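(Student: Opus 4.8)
The plan is to derive Theorem~\ref{t:main1} from the general characterization in Theorem~\ref{t:C-closed} by showing that, in the presence of unipotence, the four conditions (chain-finite, periodic, nonsingular, group-bounded) collapse to the two conditions (bounded, nonsingular). So the real content is a purely algebraic reduction, and I would isolate the key fact as: \emph{a unipotent semigroup is chain-finite, periodic and group-bounded if and only if it is bounded}. Since nonsingularity appears verbatim on both sides, no work is needed there.

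First I would prove the easy direction. Suppose $X$ is bounded, say $x^n$ is the unique idempotent $e$ for all $x\in X$. Then $X$ is trivially periodic, and it is group-bounded because every subgroup $G$ of $X$ has identity $e$ and satisfies $g^n=e$ for all $g\in G$, so $G$ has finite exponent. For chain-finiteness I would argue by contradiction from an infinite subset $I$ on which the product of any two distinct elements lands back in $\{x,y\}$; boundedness forces enough idempotent-like collapsing that one can extract a violation — more cleanly, I expect boundedness plus unipotence to make chain-finiteness automatic, and I would verify that any infinite $I$ contains $x,y$ with $xy\notin\{x,y\}$ by examining the cyclic subsemigroups generated by elements of $I$. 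Thus bounded $\Rightarrow$ chain-finite, periodic, group-bounded, and together with nonsingularity this gives the ``only if'' implication via Theorem~\ref{t:C-closed}.

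The harder direction is to show that a \emph{chain-finite, periodic, group-bounded, unipotent} semigroup $X$ is bounded. The key is to use unipotence to control the cyclic subsemigroups. For each $x\in X$, periodicity gives that the monogenic subsemigroup $\langle x\rangle$ is finite and contains a unique idempotent, which by unipotence must be the global idempotent $e$; hence each $\langle x\rangle$ has an index $\ell(x)$ and a period $p(x)$ with $x^{\ell(x)}$ lying in the cyclic group $K_x\subseteq\langle x\rangle$ whose identity is $e$. Group-boundedness bounds the periods $p(x)$ uniformly — this is where I expect the main obstacle — because $K_x$ is a subgroup of $X$, but a priori group-boundedness only bounds each individual subgroup's exponent, not uniformly across all of $X$. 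To get a uniform bound I would use chain-finiteness: if the indices $\ell(x)$ or the periods $p(x)$ were unbounded, I would construct an infinite subset of $X$ (built from well-chosen powers $x^{k}$ for varying $x$) on which every product of two distinct elements returns to one of the two factors, contradicting chain-finiteness.

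The main obstacle, then, is the combinatorial extraction step: converting ``unbounded indices/periods'' into an explicit infinite chain-finiteness-violating set. I would handle it by a case split — first bounding the periods, then the indices — and in each case selecting elements whose cyclic subsemigroups are ``nested'' or ``parallel'' enough that their pairwise products are forced to stay in $\{x,y\}$. Once both the index and the period are uniformly bounded by some $N$, it follows that $x^{N!}=e$ for all $x\in X$ (or an analogous uniform exponent works), giving boundedness. Assembling the two directions and invoking Theorem~\ref{t:C-closed} completes the proof.
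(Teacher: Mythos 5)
Your reduction hinges on the claim that a unipotent semigroup is chain-finite, periodic and group-bounded if and only if it is bounded, with nonsingularity set aside as inert (``no work is needed there''). That claim is false, and this is exactly where the proposal breaks. Consider the Rees quotient $X$ of the free commutative semigroup on generators $g_1,g_2,g_3,\dots$ by the ideal consisting of all elements other than the powers $g_n^k$ with $k\le n$; concretely, $X=\{0\}\cup\{g_n^k:n\in\IN,\ 1\le k\le n\}$, where $g_n^jg_n^k=g_n^{j+k}$ if $j+k\le n$ and all other products equal $0$. This $X$ is commutative, unipotent (the zero $0$ is its unique idempotent), periodic, and group-bounded --- indeed group-finite, since $H_0=\{0\}$. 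It is also chain-finite: each block $\{g_n^k:1\le k\le n\}$ is finite, so any infinite $I\subseteq X$ contains nonzero elements $a,b$ from different blocks, and then $ab=0\notin\{a,b\}$. (In fact chain-finiteness is nearly vacuous for unipotent semigroups: any infinite set contains a non-idempotent $x$, and $xx\ne x$.) Yet $X$ is not bounded, since $g_n^m=0$ forces $m>n$. Consequently no argument from chain-finiteness, periodicity and group-boundedness alone can yield boundedness, and the combinatorial extraction you plan --- turning unbounded indices/periods into an infinite set whose pairwise products stay in $\{x,y\}$ --- is impossible in this example: all cross-products collapse to $0$, which never lies in the chosen set.

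What saves the theorem, and what the paper actually proves, is that nonsingularity is the working hypothesis rather than a bystander. Note that the $X$ above is singular: the set $A=\{g_n^n:n\in\IN\}$ satisfies $AA=\{0\}$. The paper's Lemmas~\ref{l:bounded-e} and \ref{l:bounded-H} establish precisely this phenomenon in general: a periodic commutative semigroup with unique idempotent $e$ and bounded maximal subgroup $H_e$ which is \emph{not} bounded must contain an infinite set $A$ with $AA=\{e\}$. (Lemma~\ref{l:bounded-e} treats trivial $H_e$ by an inductive choice of elements $x_k$ and exponents $n_k$ with $x_k^{n_k}\ne e=x_k^{2n_k}$, plus growth conditions forcing $AA=\{e\}$ for $A=\{x_k^{n_k}:k\in\w\}$; Lemma~\ref{l:bounded-H} reduces the bounded-$H_e$ case to the trivial-$H_e$ case by passing to the subsemigroup $P=\{x^p:x\in X\}$.) So the correct reduction reads: for unipotent commutative semigroups, periodic $+$ group-bounded $+$ \emph{nonsingular} implies bounded; unboundedness is excluded by producing a singular set, not a chain. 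Your ``easy'' direction (bounded $\Rightarrow$ periodic, group-bounded, chain-finite) is fine, but the hard direction of your proof cannot be repaired within your framework and must be reorganized around the construction of a singular set, which is the content of the paper's Lemma~\ref{l:bounded-e}.
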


Another principal result of this paper is the following characterization of injectively $\C$-closed unipotent semigroups.

\begin{theorem}\label{t:main2} A unipotent commutative semigroup $X$ is injectively $\C$-closed if and only if $X$ is bounded, nonsingular and group-finite.
\end{theorem}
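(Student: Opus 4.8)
The plan is to prove the two directions separately, using Theorem \ref{t:main1} to dispose of the conditions \emph{bounded} and \emph{nonsingular} and concentrating the real work on \emph{group-finite}. Throughout, let $e$ be the unique idempotent of $X$. Two structural observations drive everything. First, since the identity of any subgroup of $X$ is an idempotent, it must equal $e$; hence every subgroup of $X$ is contained in the maximal subgroup $H:=eX$, and $X$ is group-finite if and only if $H$ is finite. Second, $H$ is an ideal of $X$ and the map $\pi:X\to H$, $\pi(x)=ex$, is a retraction and a homomorphism with $x\cdot h=\pi(x)\,h$ for all $x\in X$ and $h\in H$; when $X$ is bounded of exponent $n$ (so $x^n=e$ for all $x$), the group $H$ is abelian of exponent dividing $n$.

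For \emph{necessity}, the implications recorded in the introduction give injectively $\C$-closed $\Rightarrow$ $\C$-closed, so Theorem \ref{t:main1} already yields that $X$ is bounded and nonsingular; it remains to prove group-finiteness, which I do by contraposition. Assuming $H$ infinite, I use the Pr\"ufer structure theorem to write $H\cong\bigoplus_{i\in I}\IZ/{d_i}$ with each $d_i\mid n$ and $I$ infinite, and embed $H$, via $\iota$, as the dense, proper, hence non-closed subgroup $\bigoplus_{i\in I}\IZ/{d_i}$ of the compact zero-dimensional group $K:=\prod_{i\in I}\IZ/{d_i}$. I then set $Y:=(X\setminus H)\sqcup K$ with $X\setminus H$ discrete and $K$ compact, so that both summands are clopen and $Y\in\Zero$. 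Using the ideal structure and the homomorphism $\pi$, I define a multiplication on $Y$ that agrees with that of $X$ off $H$, with that of $K$ on $K$, and sets $u\cdot k:=\iota(\pi(u))\,k$ for $u\in X\setminus H$ and $k\in K$; routine checks show it is associative, commutative and continuous, so $Y$ is a topological semigroup. The map equal to the identity on $X\setminus H$ and to $\iota$ on $H$ is an injective homomorphism whose image $(X\setminus H)\sqcup\iota[H]$ is not closed, because $\iota[H]$ is a proper dense subset of the clopen piece $K$. Hence $X$ is not injectively $\C$-closed, completing the contrapositive.

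For \emph{sufficiency}, let $h:X\to Y$ be an injective continuous homomorphism into a semigroup $Y\in\C\subseteq\mathsf{T_{\!1}S}$; we may replace $Y$ by $\overline{h[X]}$ and identify $X$ with its dense image. Continuity of the $n$-th power map together with $x^n=e$ on the dense set $X$ forces $z^n=e$ for every $z\in\overline X$: the constant net $x_\alpha^n=e$ converges to $z^n$, and in a $T_1$ space its only limit is $e$. Thus $\overline X$ is again bounded of exponent $n$ and unipotent, and one checks that $H=eX$ remains its kernel; being finite it is closed in the $T_1$ space $Y$, so the maximal subgroup of $\overline X$ is still the finite group $H$. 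Moreover $\pi_Y:\overline X\to H$, $\pi_Y(z)=ez$, is a continuous retraction onto the finite discrete group $H$, so that $\overline X=\bigsqcup_{p\in H}\pi_Y^{-1}(p)$ is a partition into clopen pieces in each of which $X$ is dense.

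It therefore suffices to show that $X$ meets each clopen fibre $\pi_Y^{-1}(p)$ in a closed set; transporting fibres by the translations coming from the ideal-extension structure reduces this to the single nil fibre $N_Y:=\pi_Y^{-1}(e)$, a $T_1$ nil topological semigroup with zero $e$ in which $N:=X\cap N_Y$ is dense. \textbf{This is the main obstacle.} The difficulty is that an injective continuous homomorphism need not be a topological embedding, so one cannot simply quote the embedding-based Theorem \ref{t:main1}: the subspace topology that $Y$ induces on $X$ may be strictly coarser than the discrete one. The plan is to exclude accumulation directly via nonsingularity: supposing some point of $N$ were non-isolated, one extracts an infinite subset of $N$ accumulating at a point, multiplies it by a suitable power to obtain an infinite set concentrating at the zero $e$ (all of whose pairwise products converge to $e$), uses boundedness to limit the possible values of these products along a suitable net, and then invokes nonsingularity to forbid the resulting singular configuration. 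The delicate technical point—the crux of the whole theorem—is upgrading the mere topological concentration of products near $e$, which is all that a $T_1$ ambient space provides, to the exact algebraic coincidence of products on an infinite set that nonsingularity rules out; this is precisely where the three hypotheses \emph{bounded}, \emph{nonsingular} and \emph{group-finite} must be used in concert.
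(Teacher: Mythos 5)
Your \emph{necessity} argument is correct, and it is genuinely different from the paper's: the paper derives boundedness, nonsingularity and group-finiteness of (the center of) an injectively $\C$-closed semigroup from Theorem~\ref{t:center}, which rests on results imported from other papers, whereas you obtain group-finiteness by a self-contained construction --- Pr\"ufer decomposition of the bounded abelian group $H=H_e$, the dense non-closed copy $\bigoplus_{i\in I}\IZ/d_i\subseteq\prod_{i\in I}\IZ/d_i=K\in\Zero$, and the ideal extension $(X\setminus H)\sqcup K$ glued by $u\cdot k=\iota(eu)k$. The routine verifications (associativity, continuity, injectivity of the homomorphism, non-closedness of the image) do go through, so this half stands on its own and is arguably more elementary than the paper's citation of Theorem~\ref{t:center}.

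The \emph{sufficiency} half, however, has a genuine gap, in two respects. First, the reduction ``transporting fibres by translations'' to the nil fibre $\pi_Y^{-1}(e)$ is false: for $z\in\pi_Y^{-1}(p)$ one has $p^{-1}z=(p^{-1}e)z=p^{-1}(ez)=p^{-1}p=e$, so translation by an element of $H$ collapses the whole fibre to a single point rather than mapping it bijectively anywhere; in fact the fibres need not even be equinumerous (take $X=\{(0,1),(0,g),(a,1)\}$ inside the product of the nil semigroup $\{a,0\}$, $a^2=0$, with the group $\{1,g\}\cong\IZ/2$: the fibre over $(0,1)$ has two elements, the fibre over $(0,g)$ one). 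Second, and decisively, the actual core of the theorem --- showing that the dense, possibly non-discretely topologized, copy of $X$ has no accumulation points and is closed --- is left as a plan: you yourself name the ``upgrade from topological concentration of products near $e$ to exact algebraic coincidence on an infinite set'' as the crux and do not carry it out. This is precisely what the paper's Lemma~\ref{l:iC-single} accomplishes. It works with the filtration $\korin{k}{H_e}=\{x\in X:x^k\in H_e\}$ (not with the fibres of $\pi$), proving by induction on $k$ that each $\korin{k}{H_e}$ is discrete and closed in $Y$ (Claims~\ref{l:Xk-discrete} and \ref{l:closed2}), and the upgrade you were missing is done by a $T_1$-separation trick (Claims~\ref{l:Pe-ideal2} and \ref{l:improve2}): once $x(U\cap X)$ is known to be \emph{finite}, it is closed in the $T_1$-space $Y$, hence $xy\in x(U\cap X)$, and shrinking $U$ to a suitable $V$ gives the exact equality $xV=\{xy\}$; nonsingularity then supplies the contradiction from infinite sets $A$ with $AA$ a singleton. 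Note also how the paper sidesteps the non-embedding issue you correctly identified: Lemma~\ref{l:iC-single} assumes only \emph{algebraic} properties of the subsemigroup (bounded, nonsingular, group-finite, unipotent, commutative), which automatically transfer to the image of any injective homomorphism, so no topology on $X$ itself is ever needed.
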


\begin{example} For an infinite cardinal $\kappa$, the {\em Taimanov semigroup} $T_\kappa$ is the set $\kappa$ endowed with the semigroup operation $$xy=\begin{cases}1,&\mbox{if $x\ne y$ and $x,y\in\kappa\setminus\{0,1\}$};\\
0,&\mbox{otherwise}.
\end{cases}
$$The semigroup $T_\kappa$ was introduced by Taimanov in \cite{Taimanov}. Its algebraic and topological properties were investigated by Gutik \cite{Gutik} who proved that the semigroup $T_\kappa$ is injectively $\mathsf{T_{\!1}S}$-closed. The same fact follows also from  Theorem~\ref{t:main2} because the semigroup $T_\kappa$ is unipotent, bounded, nonsingular and group-finite. The Taimanov semigroups witness that there exist injectively $\mathsf{T_{\!1}S}$-closed unipotent semigroups of arbitrarily high cardinality.
\end{example}

For a semigorup $X$ let 
$$Z(X)\defeq\{z\in X:\forall x\in X\;\;(xz=zx)\}$$be the {\em center} of $X$.
The center of an (injectively) $\C$-closed semigroup has the following properties, proved in Lemmas~5.1, 5.3, 5.4 of \cite{CCCS} (and Theorem~1.7 of \cite{GCCS}).

\begin{theorem}[Banakh--Bardyla]\label{t:center} The center $Z(X)$ of any (injectively) $\C$-closed semigroup is chain-finite, periodic, nonsingular (and group-finite).
\end{theorem}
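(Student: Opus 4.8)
The plan is to establish each of the four necessity assertions by contraposition, building for every failed property a witnessing topological semigroup. Since $\Zero\subseteq\C$, it suffices in each case to produce a \emph{Hausdorff zero-dimensional} topological semigroup $Y$ (so that $Y\in\Zero\subseteq\C$) together with a continuous homomorphism of $X$ into $Y$ whose image is not closed, contradicting the $\C$-closedness of $X$; for the group-finiteness assertion the homomorphism is required to be injective, matching the injective hypothesis. Writing $Z:=Z(X)$, I treat chain-finiteness, periodicity and nonsingularity by one-point extensions of the discrete semigroup $X$, and group-finiteness by retopologizing an infinite subgroup of $Z$.

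For the three ``internal'' properties I first extract from the failure an infinite subset $S\subseteq Z$ of prescribed algebraic shape. If $Z$ is not chain-finite, commutativity of central elements forces the witnessing set to consist of idempotents with $st\in\{s,t\}$, i.e.\ an infinite chain semilattice, of which I keep a strictly monotone cofinal $\omega$-subchain; if $Z$ is not periodic, $S=\{z^n:n\in\IN\}$ is an infinite cyclic subsemigroup with pairwise distinct powers; if $Z$ is singular, $S=A$ is infinite with $AA=\{c\}$. I then set $Y=X\cup\{\infty\}$, declare every point of $X$ isolated, and make $\infty$ a limit of $S$. The decisive structural input is that $S\subseteq Z$: centrality guarantees that the two one-sided products $\infty\cdot x$ and $x\cdot\infty$ can be defined consistently, so that $\infty$ becomes a \emph{central} element of $Y$ (a zero in the periodic and singular cases, the top idempotent of the chain in the chain case), and it is centrality that underwrites associativity of the extended operation.

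The step I expect to be the main obstacle is the simultaneous verification that the extended multiplication on $Y$ is continuous and associative and that $Y$ is Hausdorff and zero-dimensional. Zero-dimensionality is cheap, because the basic neighbourhoods of $\infty$ I use (tails of the two-sided ideal generated by $S$ in the zero cases, tails of the chain in the chain case) are clopen, their complements lying in the discrete set $X$. The genuine difficulty is twofold. First, continuity at pairs involving $\infty$ forces the chosen neighbourhood filter of $\infty$ to be absorbing under multiplication by arbitrary elements of $X$; verifying $W\cdot X^1\subseteq W$ and $W\cdot W\subseteq W$ is exactly where the precise algebraic form of $S$ and its centrality are spent, and where one must control the behaviour of the products $s\cdot x$ with $s\in S$ tending to $\infty$ and $x\in X$ arbitrary. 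Second, Hausdorffness amounts to separating $\infty$ from every point of $X$, i.e.\ to showing that the intersection of the chosen neighbourhoods of $\infty$ reduces to $\{\infty\}$; this fails precisely when some element of $X$ is ``infinitely divisible'' along $S$, and the cases must be played off against one another, since such a witness would itself produce one of the other forbidden configurations. Once $\infty\in\overline{X}\setminus X$ in a legitimate $Y\in\Zero$, the semigroup $X$ is not closed in $Y$, the required contradiction.

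For group-finiteness under injective $\C$-closedness, suppose $Z$ contains an infinite subgroup $G$, necessarily abelian since it is central. Every infinite abelian group carries a non-discrete, Hausdorff, zero-dimensional group topology that is not Ra\u\i kov complete, so $G$ sits densely and non-closedly in its completion $\bar G$. I then assemble $Y$ by attaching $\bar G$ to $X$ along $G$ and extending the multiplication of $X$ over the new limit points of $\bar G$ by continuity, again using centrality of $G$ to define the (two-sided) products of limit points with elements of $X$; the inclusion $X\hookrightarrow Y$ is an injective continuous homomorphism onto a non-closed image. As before, the heavy step is confirming that this extension is a well-defined continuous associative operation with $Y$ Hausdorff and zero-dimensional, the delicate point being the convergence required to define products of the adjoined limit points with arbitrary $x\in X$.
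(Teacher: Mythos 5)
First, a point of reference: this paper does not prove Theorem~\ref{t:center} at all --- it is imported from Lemmas~5.1, 5.3, 5.4 of \cite{CCCS} and Theorem~1.7 of \cite{GCCS} --- so your proposal can only be judged as a free-standing reconstruction, and as such it is a plan rather than a proof. Your global strategy (contraposition; since $\Zero\subseteq\C$ it suffices to build a Hausdorff zero-dimensional witness; one-point extensions in which centrality of the witnessing set makes the adjoined point central and hence the extended operation associative) is the right kind of approach and matches the spirit of the cited arguments. But at every point where the theorem's actual content lies, you name the obstacle and defer it instead of resolving it. Take the periodicity case, $S=z^\IN$: continuity at pairs $(x,\infty)$ forces each neighborhood of $\infty$ to absorb multiplication by every $x\in X$, and since $xz^m$ in general escapes $z^\IN$, tails of $S$ are unusable (e.g.\ $X=\IZ\times\IZ/2\IZ$, $z=(1,0)$, $x=(0,1)$); so you pass, as you say, to tails of the ideal, $W_n=\{\infty\}\cup\bigcup_{m\ge n}z^mX^1$. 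But then Hausdorffness requires every point of $X$ to avoid some $W_n$, and this genuinely fails: if the non-periodic element $z$ lies in a subgroup $H\subseteq X$ with identity $f$ (the simplest non-periodic example $X=\IZ$ is already of this form), then $z^mX^1\supseteq(z^mz^{-m})X^1=fX^1$ for every $m$, so every ideal-tail contains the whole ideal $fX^1$ and even $T_1$ is lost. Your one-sentence remedy --- that a Hausdorffness failure ``would itself produce one of the other forbidden configurations'' --- is precisely the content of the cited lemmas; it is asserted, not proved, and the ordered case analysis it requires (proving the three properties in a sequence so that each proof may use the previous ones, with groups handled by a separate construction) is entirely absent. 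The same criticism applies to the chain and singular cases, where the neighborhood base at $\infty$ and the values $\infty\cdot x$ are never pinned down precisely enough that continuity, associativity or Hausdorffness could even be checked.

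The group-finiteness step is not merely incomplete but fails as stated. Even granting that the infinite torsion abelian group $G\subseteq Z(X)$ carries a non-discrete Hausdorff zero-dimensional non-complete group topology (clear for countable or bounded $G$, but your blanket claim for arbitrary infinite abelian groups needs an argument), the object $Y=X\cup\bar G$ you describe cannot be made a topological semigroup ``by continuity'': for $\bar g\in\bar G\setminus G$ and $x\in X$ with $ex\notin G$ (where $e$ is the identity of $G$), the net $(g_\alpha x)_\alpha$ with $g_\alpha\to\bar g$ lives in the part of $Y$ lying over $X$, which in your construction is discrete, so it has no accumulation point and there is no candidate value for $\bar g\cdot x$. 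In other words, $X\cup\bar G$ is not closed under any continuous extension of the multiplication; one must adjoin limit points for all such product nets and then verify that the enlarged object is an associative, Hausdorff, zero-dimensional topological semigroup receiving $X$ injectively with non-closed image. That enlargement and verification is exactly the work done in the proof of Theorem~1.7 of \cite{GCCS}, and nothing in your sketch substitutes for it.
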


\begin{corollary}\label{c:iC} The center $Z(X)$ of an injectively $\C$-closed unipotent semigroup $X$ is injectively $\C$-closed.
\end{corollary}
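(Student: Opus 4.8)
The plan is to reduce the statement to Theorem~\ref{t:main2} by checking that the center $Z(X)$, which is automatically a commutative subsemigroup of $X$, happens to be unipotent, bounded, nonsingular and group-finite. To begin I would record the routine facts that $Z(X)$ is closed under the operation and commutative: for $z_1,z_2\in Z(X)$ one has $z_1z_2=z_2z_1$ since $z_2\in X$, and $z_1z_2\in Z(X)$ by a one-line associativity check. Thus $Z(X)$ is a commutative semigroup, and the appropriate characterization to aim at is the one in Theorem~\ref{t:main2}.

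Next I would extract the numerical data of $X$. Since $X$ is injectively $\C$-closed, it is $\C$-closed, so by Theorem~\ref{t:main1} the unipotent semigroup $X$ is bounded and nonsingular. Let $e$ denote the unique idempotent of $X$ and fix $n\in\IN$ with $x^n=e$ for every $x\in X$ (using boundedness together with the uniqueness of the idempotent). The key observation, and the one small nontrivial point in the argument, is that boundedness forces $e$ to be central: for each $x\in X$ we have $ex=x^{n}x=x^{n+1}=x\,x^{n}=xe$, whence $e\in Z(X)$ and in particular $Z(X)\ne\varnothing$. Since any idempotent of $Z(X)$ is an idempotent of $X$ and hence equals $e$, the center $Z(X)$ contains exactly one idempotent, i.e. $Z(X)$ is unipotent.

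It then remains to verify the three conditions of Theorem~\ref{t:main2} for $Z(X)$. Boundedness and nonsingularity I would inherit directly from $X$ via the inclusion $Z(X)\subseteq X$: the identity $z^n=e$ holds for every $z\in Z(X)$, and an infinite $A\subseteq Z(X)$ with $AA$ a singleton would already witness singularity of $X$. Group-finiteness, however, cannot be inherited in this way, since $X$ itself may contain infinite subgroups; here I expect the main (and only genuinely non-formal) input to come from Theorem~\ref{t:center}, which asserts exactly that the center of an injectively $\C$-closed semigroup is group-finite. Having established that $Z(X)$ is a unipotent commutative semigroup that is bounded, nonsingular and group-finite, I would conclude by Theorem~\ref{t:main2} that $Z(X)$ is injectively $\C$-closed, completing the proof.

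In short, everything except the group-finiteness is bookkeeping inherited from $X$ or from boundedness; the substantive step is recognizing that boundedness makes $e$ central (so that $Z(X)$ is unipotent and nonempty) and then citing Theorem~\ref{t:center} for the one property that genuinely concerns the center.
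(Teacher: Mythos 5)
Your proof is correct, and it takes a genuinely different route from the paper's. The paper never applies any characterization theorem to $X$ itself: it applies Theorem~\ref{t:center} to get that $Z(X)$ is chain-finite, periodic, nonsingular and group-finite, then Theorem~\ref{t:C-closed} to conclude $Z(X)$ is $\C$-closed, then Theorem~\ref{t:main1} (applied to $Z(X)$, after splitting off the case $Z(X)=\emptyset$ and deducing unipotency of $Z(X)$ from periodicity) to get that $Z(X)$ is bounded, and finally Theorem~\ref{t:main2}. You instead apply Theorem~\ref{t:main1} to $X$ itself, inherit boundedness and nonsingularity downward to $Z(X)$ (which works precisely because $X$ is unipotent, so all $n$-th powers land on the single idempotent $e$), and invoke Theorem~\ref{t:center} only for the one clause that genuinely concerns the center, namely group-finiteness. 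Your observation that boundedness forces $e$ to be central, via $ex=x^nx=x^{n+1}=xx^n=xe$, is a nice bonus: it shows $Z(X)\ne\emptyset$ and makes unipotency of $Z(X)$ immediate, eliminating the paper's case distinction.

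One caveat deserves attention. Your first step applies Theorem~\ref{t:main1} to $X$, which in Corollary~\ref{c:iC} is \emph{not} assumed commutative. This is licensed by the statement of Theorem~\ref{t:main1} as printed (it reads ``unipotent semigroup''), but the paper's own proof of that theorem goes through Lemma~\ref{l:main1}, which assumes commutativity, and the abstract claims the characterization only for unipotent \emph{commutative} semigroups. So your argument rests on the noncommutative case of Theorem~\ref{t:main1}, which the paper never actually establishes; by contrast, the paper's proof of the corollary is arranged so that the commutative characterization theorems are applied only to the commutative semigroup $Z(X)$. The repair is cheap and essentially recovers the paper's route: Theorem~\ref{t:center} gives you periodicity and nonsingularity of $Z(X)$ in addition to group-finiteness; if $Z(X)=\emptyset$ it is trivially injectively $\C$-closed; otherwise periodicity yields an idempotent of $Z(X)$, which must equal $e$, so $Z(X)$ is unipotent, and the implication $(2)\Ra(3)$ of Lemma~\ref{l:main1}, applied to the commutative unipotent semigroup $Z(X)$, yields boundedness, after which Theorem~\ref{t:main2} finishes as in your write-up.
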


\begin{proof} By Theorem~\ref{t:center}, the semigroup $Z(X)$ is chain-finite, periodic, nonsingular, and group-finite. By Theorem~\ref{t:C-closed}, the semigroup $Z(X)$ is $\C$-closed. By Theorem~\ref{t:main1}, $Z(X)$ is bouned. If $Z(X)$ is empty, then $Z(X)$ is injectively $\C$-closed. So, we assume that $Z(X)\ne\emptyset$. Being bounded, the semigroup $Z(X)$ contains an idempotent. Being a subsemigroup of the unipotent semigroup $X$, the semigroup $Z(X)$ is unipotent.  By Theorem~\ref{t:main2}, the unipotent bounded nonsingular group-finite semigroup $Z(X)$ is injectively $\C$-closed.
\end{proof}

Another corollary of Theorem~\ref{t:main2} describes the center of an absolutely $\C$-closed unipotent semigroup.

\begin{corollary}\label{c:aC} The center $Z(X)$ of an absolutely $\C$-closed unipotent semigroup $X$ is finite and hence absolutely $\C$-closed.
\end{corollary}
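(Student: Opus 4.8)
The plan is to reduce the statement to the finiteness of $Z(X)$ and then to force that finiteness by producing, whenever $Z(X)$ is infinite, a homomorphism with non-closed image. The easy half is the implication ``finite $\Ra$ absolutely $\C$-closed'': if $Z(X)$ is finite then for any homomorphism $h\colon Z(X)\to Y\in\C$ the image $h[Z(X)]$ is a finite subset of a $T_1$-space, hence closed. So it suffices to prove that $Z(X)$ is finite. First I would assemble the algebraic data. Since absolute $\C$-closedness implies injective $\C$-closedness, Corollary~\ref{c:iC} makes $Z:=Z(X)$ injectively $\C$-closed; it is commutative (being a center) and unipotent (being a subsemigroup of the unipotent $X$ that contains the idempotent $e$ of $X$). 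Theorem~\ref{t:main2} then yields that $Z$ is bounded, nonsingular and group-finite; fix $n\in\IN$ with $z^n=e$ for all $z\in Z$.

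The core is the following structural claim: an infinite bounded commutative unipotent group-finite semigroup $Z$ surjects onto an infinite null semigroup (one with zero multiplication). I would obtain this through two Rees quotients. The ideal $eZ$ coincides with the maximal subgroup $H_e$, because each element $ez$ satisfies $(ez)^n=e$ and is therefore invertible in the monoid $eZ$ with identity $e$; group-finiteness then makes $eZ$ finite. Consequently the Rees quotient $\bar Z:=Z/eZ$ is an infinite, bounded, commutative nil semigroup, each class satisfying $\bar z^{\,n}=\bar 0$. For such a semigroup one shows $\bigcap_{k}\bar Z^{\,k}=\{\bar 0\}$, from which it follows that $\bar Z$ is generated by $\bar Z\setminus\bar Z^2$; a pigeonhole count (no generator may occur $n$ or more times in a nonzero product) then forces $\bar Z$ to be finite whenever $\bar Z\setminus\bar Z^2$ is finite. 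Hence $\bar Z\setminus\bar Z^2$ is infinite, and the second Rees quotient $N:=\bar Z/\bar Z^2$ is an infinite null semigroup which is a homomorphic image of $Z$.

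Finally I would show that an infinite null semigroup $N$ is never absolutely $\C$-closed. Take the one-point compactification $Y=N\cup\{\infty\}$ of the discrete set $N$ and declare all products in $Y$ equal to the zero $0_N$; the multiplication, being constant, is automatically continuous, so $Y$ is a compact Hausdorff zero-dimensional topological semigroup, i.e. $Y\in\Zero\subseteq\C$. The subsemigroup $N$ is dense in $Y$ but does not contain $\infty$, hence is not closed. Composing with the surjection $Z\to N$ exhibits a homomorphism whose image is not closed, contradicting absolute $\C$-closedness and forcing $Z(X)$ to be finite, and therefore absolutely $\C$-closed.

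The step I expect to be the main obstacle is making this construction bear on $X$ rather than merely on its center: the map $Z\to N\hookrightarrow Y$ is a homomorphism defined on $Z(X)$, but it need not extend to $X$ by sending $X\setminus Z$ to $0_N$, since a product of two elements of $X$ may land in $Z$ without lying in the ideal collapsed to $\bar 0$. Overcoming this requires either proving directly that the center of an absolutely $\C$-closed semigroup is again absolutely $\C$-closed (so that the homomorphism on $Z$ alone suffices), or promoting the null quotient to a congruence on all of $X$. The auxiliary identity $\bigcap_{k}\bar Z^{\,k}=\{\bar 0\}$ is the other delicate point, as it is where boundedness, group-finiteness and nonsingularity must be combined to exclude a nonzero element lying in every power.
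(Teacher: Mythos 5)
Your reductions are mostly sound (the easy direction via finiteness in $T_1$-spaces, the properties of $Z:=Z(X)$ from Corollary~\ref{c:iC} and Theorem~\ref{t:main2}, the identity $eZ=H_e$, and the fact that an infinite null semigroup embeds non-closedly into its one-point compactification), but the obstacle you flag at the end is not a technical wrinkle to be smoothed later: it is the entire content of the statement, and your proposal does not overcome it. Everything you construct is a homomorphism with non-closed image defined on $Z(X)$, whereas the hypothesis is that $X$ is absolutely $\C$-closed; nothing in the argument converts a bad homomorphism of $Z(X)$ into a bad homomorphism of $X$. The two escape routes you name are both unavailable: (a) ``the center of an absolutely $\C$-closed semigroup is absolutely $\C$-closed'' is precisely Problem~\ref{prob}(3) of the paper, which is open and whose unipotent case is the very corollary being proved, so invoking it is circular; (b) promoting the null quotient to a congruence on $X$ fails because the collapsed set $I\subseteq Z(X)$ need not be an ideal of $X$ (for $x\in X\setminus Z(X)$ and $z\in I$, the product $xz$ need not lie in $Z(X)$ at all), so no Rees quotient of $X$ is available, and the naive extension sending $X\setminus Z(X)$ to zero is not multiplicative, as you note. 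This is exactly the point where the paper appeals to an external result, Theorem~1.7 of \cite{GCCS}, which gives finiteness of $Z(X)\setminus H_e$ directly from the absolute $\C$-closedness of $X$; its proof has to operate with homomorphisms of $X$ itself. Note also that finiteness of $Z(X)$ cannot be extracted from the properties you establish for $Z(X)$ alone (bounded, nonsingular, group-finite, commutative, unipotent, injectively $\C$-closed): the Taimanov semigroup $T_\kappa$ enjoys all of them and is infinite. So the absolute $\C$-closedness of $X$ must enter in an essential way, and in your proposal it never does in a usable form.

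Independently, your ``core structural claim'' is unproved and false as stated. Consider the semigroup $Z$ whose elements are $0$ and all finite unions of half-open intervals $[a,b)$ with rational endpoints, with product $AB=A\cup B$ if $A\cap B=\emptyset$ and $AB=0$ otherwise. This is an infinite commutative nil semigroup with $z^2=0$ for every $z$ (hence bounded and unipotent) and trivial maximal subgroup, yet every nonzero element splits as a product of two disjoint pieces, so $Z=Z^2$ and hence $\bigcap_{k\in\IN}Z^k=Z\ne\{0\}$, while $Z\setminus Z^2=\emptyset$ and your quotient $Z/Z^2$ is a singleton. Thus ``infinite bounded commutative nil'' gives neither the auxiliary identity nor an infinite null quotient; this example is singular, so nonsingularity would have to carry the whole weight of any repair, and you give no argument for that (it is not clear the claim survives). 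Consequently even the commutative case $X=Z(X)$ of the corollary, where the bridging problem of the first paragraph disappears, is not established by the proposal.
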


\begin{proof} By Theorem~\ref{t:center}, the semigroup $Z(X)$ is chain-finite, periodic, nonsingular, and group-finite. If $Z(X)$ is empty, then $Z(X)$ is finite and hence absolutely $\C$-closed. So, we assume that $Z(X)$ is not empty. Being periodic, the semigroup $Z(X)$ contains an idempotent $e$. Since $X$ is unipotent, $e$ is a unique idempotent of the semigroups $X$ and $Z(X)$. Let $H_e$ be the maximal subgroup of the semigroup $Z(X)$. The group $H_e$ is finite because $Z(X)$ is group-finite.  By Theorem~1.7 of \cite{GCCS}, the complement $Z(X)\setminus H_e$ is finite and hence the set $Z(X)$ is finite, too.
\end{proof}

\begin{remark} Theorem~\ref{t:main2} is an essential ingredient in the characterizations of injectively $\C$-closed commutative semigroups, obtained in \cite{ICT1S} and \cite{ICCS}.
\end{remark}

Corollaries~\ref{c:iC} and \ref{c:aC} suggest the following open problems.

\begin{problem}\label{prob}
\begin{enumerate}
\item Is the center of a $\C$-closed semigroup $\C$-closed? 
\item Is the center of an injectively $\C$-closed semigroup injectively $\C$-closed?
\item Is the center of an absolutely $\C$-closed semigroup absolutely $\C$-closed?
\end{enumerate}
\end{problem}

\begin{remark} By \cite{ICT1S}, for the class $\C=\mathsf{T_{\!1}S}$ of $T_1$ topological semigroups, the answer to Problem~\ref{prob}(2,3) is affirmative. Also some partial answers to Problem~\ref{prob}(3) for classes $\mathsf{T_{\!z}S}\subseteq\C\subseteq\mathsf{T_{\!2}S}$ are given in \cite{ACS}.
\end{remark}

\section{Preliminaries}\label{s:prelim}

We denote by $\w$ the set of finite ordinals and by $\IN\defeq\w\setminus\{0\}$ the set of positive integer numbers.

For an element $a$ of a semigroup $X$ the set
$$H_a\defeq\{x\in X:(xX^1=aX^1)\;\wedge\;(X^1x=X^1a)\}$$
is called the {\em $\mathcal H$-class} of $a$.
Here $X^1\defeq X\cup\{1\}$ where $1$ is an element such that $1x=x=x1$ for all $x\in X^1$.

By Corollary 2.2.6 \cite{Howie}, for every idempotent $e$ of a semigroup $X$ its $\mathcal H$-class $H_e$ coincides with the maximal subgroup of $X$, containing the idempotent $e$.

For a subset $A$ of a semigroup $X$ and a positive integer number $n$, let
$$\korin{n}{A}=\{x\in S:x^n\in A\}\quad\mbox{and}\quad\korin{\infty}{A}=\bigcup_{n\in\IN}\korin{n}{A}=\{x\in S:A\cap x^\IN\ne\emptyset\},$$ where $$x^\IN=\{x^n:n\in\IN\}$$is the {\em monogenic semigroup} generated by $x$. 

The following lemma is proved in \cite[3.1]{CCCS}.

\begin{lemma}\label{l:C-ideal} For any idempotent $e$ of a semigroup  we have $(\!\korin{\infty}{H_e}\cdot H_{e})\cup(H_{e}\cdot \korin{\infty}{H_e}\,)\subseteq H_{e}.$
\end{lemma}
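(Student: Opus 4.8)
The plan is to prove both inclusions simultaneously by reducing them to a single power computation. Fix $x\in\korin{\infty}{H_e}$ and $h\in H_e$, and choose $n\in\IN$ with $g\defeq x^n\in H_e$. Recall (from the remark following Corollary~2.2.6 of \cite{Howie} quoted above) that $H_e$ is the maximal subgroup of $X$ with identity $e$, so $g$ has an inverse $g^{-1}\in H_e$ with $gg^{-1}=g^{-1}g=e$. Since $h=eh=he$, we have $xh=(xe)h$ and $hx=h(ex)$; because $H_e$ is closed under multiplication, it therefore suffices to prove that $xe\in H_e$ and $ex\in H_e$. Here the key observation is the identities $xe=x^{n+1}g^{-1}$ and $ex=g^{-1}x^{n+1}$ (both verified by associativity using $x^ng^{-1}=e=g^{-1}x^n$). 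Consequently, if we show $x^{n+1}\in H_e$, then $xe$ and $ex$ are products of the elements $x^{n+1},g^{-1}\in H_e$ and hence lie in $H_e$, finishing the proof. So the whole statement reduces to the single claim $x^{n+1}\in H_e$.

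To establish $x^{n+1}\in H_e$ I would verify $x^{n+1}\,\mathcal{R}\,e$ and $x^{n+1}\,\mathcal{L}\,e$, i.e. $x^{n+1}X^1=eX^1$ and $X^1x^{n+1}=X^1e$. One inclusion in each case is automatic: since $x^{n+1}=x^n\cdot x\in x^nX^1$ and $x^{n+1}=x\cdot x^n\in X^1x^n$, while $x^n\in H_e$ gives $x^nX^1=eX^1$ and $X^1x^n=X^1e$, we get $x^{n+1}X^1\subseteq eX^1$ and $X^1x^{n+1}\subseteq X^1e$. For the reverse inclusions I would exhibit explicit witnesses using the group inverse $g^{-1}$ and the identity $x^{n+1}x^{n-1}=x^{n-1}x^{n+1}=x^{2n}=g^2$: taking $v\defeq x^{n-1}g^{-2}\in X^1$ and $u\defeq g^{-2}x^{n-1}\in X^1$ (with the convention $x^0=1\in X^1$ when $n=1$), one computes $x^{n+1}v=x^{2n}g^{-2}=g^2g^{-2}=e$ and $ux^{n+1}=g^{-2}x^{2n}=g^{-2}g^2=e$. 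Hence $e\in x^{n+1}X^1\cap X^1x^{n+1}$, which yields the missing inclusions $eX^1\subseteq x^{n+1}X^1$ and $X^1e\subseteq X^1x^{n+1}$. Thus $x^{n+1}\,\mathcal{H}\,e$, that is $x^{n+1}\in H_e$, as required.

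The step I expect to be the genuine obstacle is precisely the reduction in the first paragraph together with spotting the witnesses $u,v$ in the second. A direct attempt to show $xe\in H_e$ fails because $e$ need not be absorbed by $x$: in general $ex\ne x$ and $xe\ne x$ (for instance for a non-trivial-index element of a monogenic semigroup), so the low power $x$ possesses no inverse relative to $e$ and one cannot place $x$ itself into the $\mathcal{H}$-class of $e$. The point is that it is the \emph{group} element $g=x^n$ that is invertible, and passing to $x^{n+1}=gx=xg$ transfers this invertibility to a power that does lie in $H_e$; the factorizations $xe=x^{n+1}g^{-1}$ and $ex=g^{-1}x^{n+1}$ then deliver $xe,ex\in H_e$ without ever needing $x\in H_e$. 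I would also note the trivial edge case $n=1$, where $x=g\in H_e$ and the claim is immediate, and remark that the same computation handles both halves $\korin{\infty}{H_e}\cdot H_e$ and $H_e\cdot\korin{\infty}{H_e}$ at once, so no separate appeal to left–right duality is needed.
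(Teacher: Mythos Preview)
Your proof is correct. The reduction to showing $x^{n+1}\in H_e$ via the identities $xe=x^{n+1}g^{-1}$ and $ex=g^{-1}x^{n+1}$ is clean, and the explicit witnesses $u=g^{-2}x^{n-1}$ and $v=x^{n-1}g^{-2}$ (with the $n=1$ convention you noted) do yield $e\in x^{n+1}X^1\cap X^1x^{n+1}$, so $x^{n+1}\mathcal{H}e$ as claimed.

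There is nothing to compare against in this paper: the lemma is not proved here but is quoted from \cite[3.1]{CCCS}. For what it is worth, the argument there is organized slightly differently---one first shows directly that $ex,xe\in H_e$ by exhibiting $e=(ex)(x^{n-1}g^{-1})=(g^{-1}x^{n-1})(xe)$ together with $ex\in eX^1\cap X^1e$---but the algebraic content is the same Green's-relations computation you carry out, just applied to $ex$ and $xe$ rather than to $x^{n+1}$.
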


\section{Proof of Theorem~\ref{t:main1}}

Theorem~\ref{t:main1} will be derived from the following lemmas.

\begin{lemma}\label{l:bounded-e} Let $X$ be a periodic commutative semigroup with a unique idempotent $e$ and trivial maximal subgroup $H_e$. If $X$ is not bounded, then there exists an infinite subset $A\subseteq X$ such that $AA=\{e\}$.
\end{lemma}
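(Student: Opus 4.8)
The plan is to reduce the multiplicative structure to a genuine nil semigroup with a zero, then manufacture $A$ out of an annihilator ideal; the only real work is to make that annihilator infinite.

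First I would show that $e$ is a two-sided zero. Since $H_e=\{e\}$ is trivial and $X$ is periodic with $e$ its unique idempotent, every $x\in X$ has a power $x^n$ that is idempotent, hence equal to $e$; thus $\korin{\infty}{H_e}=X$. Lemma~\ref{l:C-ideal} then gives $X\cdot\{e\}\cup\{e\}\cdot X=(\korin{\infty}{H_e}\cdot H_e)\cup(H_e\cdot\korin{\infty}{H_e})\subseteq H_e=\{e\}$, so $xe=e=ex$ for all $x\in X$. For $x\ne e$ write $n_x$ for the least $n\in\IN$ with $x^n=e$ (so $n_x\ge2$), set $X_2=\{x\in X:x^2=e\}$, and note that unboundedness means $\sup_x n_x=\infty$.

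The engine of the proof is the following identity: for $b\in X_2$ and any $x,y\in X$ one has $(bx)(by)=b^2xy=e\,xy=e$, because $e$ is a zero. Hence for every $b\in X_2$ the principal ideal $bX$ is a \emph{null set}, meaning $(bX)(bX)=\{e\}$; the same computation shows that the global annihilator $\{z:zX=\{e\}\}$ is a null set, and that $DX$ is a null set whenever $D\subseteq X_2$ is one. So if any of these ideals is infinite, it \emph{is} the desired $A$. I would also record that the upper powers $x^{\lceil n_x/2\rceil},\dots,x^{n_x-1}$ of a single $x$ form a null set (their pairwise products are powers $x^i$ with $i\ge n_x$, hence $e$) of size $\lfloor n_x/2\rfloor$; thus unboundedness already yields null sets of arbitrarily large \emph{finite} size, and the entire difficulty is to promote this to one infinite null set.

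The assembly would be recursive inside $X_2$, building distinct, pairwise (hence, since $a_i^2=e$, totally) annihilating elements $a_1,a_2,\dots$, with bookkeeping through the ideals $J_k=\bigcap_{i\le k}\mathrm{Ann}(a_i)$, where $\mathrm{Ann}(a)=\{z:az=e\}$ is an ideal (as $e$ is a zero). The decisive structural fact is that $J_k$, being an ideal, is closed under powers. Consequently any $y\in J_k$ with $n_y\ge2$ supplies a new element $a_{k+1}=y^{\lceil n_y/2\rceil}\in J_k\cap X_2$: it satisfies $a_{k+1}^2=e$ and, lying in $J_k$, annihilates all previous $a_i$, so $A_{k+1}=A_k\cup\{a_{k+1}\}$ is again a null set. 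The recursion therefore never stalls as long as $J_k\cap X_2$ is infinite, and in that case it produces the required infinite $A$.

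The crux — and the step I expect to demand the most care — is keeping $J_k\cap X_2$ infinite, equivalently keeping $J_k$ of \emph{unbounded degree}, since intersecting with one further annihilator could collapse the degrees into a Taimanov-type configuration in which all annihilators, and hence all null sets, are finite. I would attack this through the dichotomy suggested above. Either some $bX$ with $b\in X_2\setminus\{e\}$ is infinite, and then $A=bX$ finishes the proof immediately; or every such $bX$ is finite, in which case a pigeonhole argument first forces $X_2$ to be infinite (the $\lfloor n_x/2\rfloor$ distinct upper powers of a high-degree $x$ lie in $X_2$, so a finite $X_2$ would bound all $n_x$), after which one must choose the $a_i$ as top powers — these have the largest annihilators, $\mathrm{Ann}(x^{n_x-1})\supseteq\mathrm{Ann}(x^j)$ for all $j$ — and argue that unboundedness of $X$ propagates to unboundedness of each $J_k$. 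Making this propagation rigorous, i.e.\ ruling out the collapse to finitely many annihilators while $X$ remains unbounded, is exactly where the hypothesis of unboundedness is genuinely spent, and is the heart of the argument.
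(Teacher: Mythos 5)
Your reduction is sound as far as it goes, and it mirrors the paper's setup: $e$ is a two-sided zero (via Lemma~\ref{l:C-ideal}), every $bX$ with $b\in X_2$ is a null set, the upper-half powers of any $x$ are pairwise distinct elements of $X_2$, and the dichotomy ``some $bX$ is infinite / every $bX$ is finite'' disposes of the first branch immediately. But the proposal does not prove the lemma: in the second branch you explicitly leave open the step that keeps the recursion alive --- ``argue that unboundedness of $X$ propagates to unboundedness of each $J_k$'' --- and you yourself label it the heart of the argument. As written, nothing rules out that after finitely many choices the ideal $J_k=\bigcap_{i\le k}\mathrm{Ann}(a_i)$ meets $X_2$ only in the elements already used; your remark that top powers have the largest annihilators among powers of a \emph{single} element does not address this, because the obstruction concerns annihilators across different generators. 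So the argument is incomplete exactly at its crux.

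The missing idea --- and it is how the paper closes this very gap --- is a pigeonhole/eventual-periodicity argument: if $a\in X_2$ and the null set $aX$ is finite, say $|aX|=s$, then for any $x\in X$ the sequence $ax,ax^2,ax^3,\dots$ takes at most $s$ values, so $ax^{j}=ax^{j+d}$ for some $j<j+d\le s+1$; iterating gives $ax^{j}=ax^{j+pd}$ for all $p$, and since $x$ is periodic and $e$ is a zero, $x^{j+pd}=e$ for large $p$, whence $ax^{j}=e$ and therefore $ax^{m}=e$ for \emph{every} $m\ge s+1$. Thus, under your second branch (all the relevant ideals $a_iX$, $i\le k$, finite), every power $x^m$ with $m>\max_{i\le k}|a_iX|$ lies in $J_k$, so any element of sufficiently large index supplies, among its top-half powers (which are pairwise distinct and lie in $X_2$), a fresh admissible $a_{k+1}$; unboundedness guarantees such elements exist at every stage. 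This is precisely the paper's inductive condition $\max_{i<k}|x_i^{n_i}X|<n_k$ combined with its Pigeonhole computation. With this step inserted your recursion does produce an infinite null set; without it, the proposal stops short of a proof.
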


\begin{proof} To derive a contradiction, assume that $X$ is not bounded but for every infinite set $A\subseteq X$ we have $AA\ne\{e\}$. Taking into account that $X$ is periodic and unipotent, we conclude that $X=\korin{\infty}{H_e}$. By Lemma \ref{l:C-ideal}, the maximal subgroup $H_e=\{e\}$ is an ideal in $X$. 

Inductively we shall construct  a sequence of points $(x_k)_{k\in\w}$ and a  sequence of positive integer numbers $(n_k)_{k\in\w}$ such that for every $k\in\w$ the following conditions are satisfied:
\begin{itemize}
\item[(i)] $x_k^{n_k}\notin\{e\}\cup\{x_i^{n_i}:i<k\}$;
\item[(ii)] $x_k^{2n_k}=e$;
\item[(iii)] $\max_{i<k}|x_i^{n_i}X|<n_k$.
\end{itemize}
To start the inductive construction, take any $x_0\in X\setminus\{e\}$ and let $n_0$ be the smallest number such that $x^{n_0+1}_0=e$. Such number $n_0$ exists as $X$ is periodic. Since $\{e\}=H_e$ is an ideal in $X$, it follows from $x_0^{n_0+1}=e$ and $2n_0\ge n_0+1$ that $x_0^{2n_0}=e$. Assume that for some $k\in\IN$, we have chosen sequences $(x_i)_{i<k}$ and $(n_i)_{i<k}$. For every $i<k$, consider the set $x_i^{n_i}X$ and observe that for every $a,b\in X$ the inductive condition (ii) implies $$(x_i^{n_i}a)(x^{n_i}_ib)=x_i^{2n_i}ab=eab=e.$$ This means that $(x_i^{n_i}X)^2=\{e\}$ and by our assumption, the set $x_i^{n_i}X$ is finite. Since $X$ is unbounded, there exists an element $x_k\in X$ and a number $m_k>k+\max_{i<k}|x_i^{n_i}X|$ such that $x_k^{k+m_k}\ne e$ but $x_k^{1+k+m_k}=e$. Since the set $\{x_k^{j}:m_k\le j\le m_k+k\}$ consists of $k+1$ points, there exist a number $n_k\in\{m_k,\dots,m_k+k\}$ such that $x_k^{n_k}\notin\{x_i^{n_i}:i<k\}$. It follows from $x_k^{k+m_k}\ne e=x_k^{1+k+m_k}$ and $2n_k\ge m_k+m_k\ge 1+k+m_k>n_k$ that $x_k^{n_k}\ne e=x_k^{2n_k}$. This completes the inductive step.
\smallskip

After completing the inductive construction, consider the infinite set $A=\{x_k^{n_k}:k\in\w\}$. We claim that $x_i^{n_i}x_k^{n_k}=e$ for any $i\le k$. For $i=k$ this follows from the inductive condition (ii). So, assume that $i<k$. By the induction condition (iii) and the Pigeonhole Principle,  there exist two positive numbers $j<j'\le |x_i^{n_i}X|+1\le n_k$ such that $x_i^{n_i}x_k^{j}=x_i^{n_i}x_k^{j'}$. Let $d=j'-j\le n_k$ and observe that $x_i^{n_i}x_k^{j}=x_i^{n_i}x_k^{j+d}$. Then $$x_i^{n_i}x_k^{j+2d}=x_i^{n_i}x_k^{j+d}x_k^{d}=x_i^{n_i}x_k^{j}x_k^{d}=x_i^{n_i}x_k^{j+d}=x_i^{n_i}x_k^{j}.$$Proceeding by induction, we can prove that $x_i^{n_i}x_k^{j}=x_i^{n_i}x_k^{j+pd}$ for every $p\in\w$. Since $X$ is periodic and $\{e\}=H_e$ is an ideal in $X$, there exists $p\in\IN$ such that $x_k^{j+pd}=e$ and hence $x_i^{n_i}x_k^{j}=x_i^{n_i}x_k^{j+pd}=e$.  Then $x_i^{n_i}x_k^{n_k}=x_i^{n_i}x_k^jx_k^{n_k-j}=ex_k^{n_k-j}=e$ and hence $AA=\{e\}$, which contradicts our assumption.
\end{proof}

\begin{lemma}\label{l:bounded-H} Let $X$ be a periodic commutative semigroup with a unique idempotent $e$ and bounded maximal subgroup $H_e$. If $X$ is not bounded, then there exists an infinite subset $A\subseteq X$ such that $AA=\{e\}$.
\end{lemma}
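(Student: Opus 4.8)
The plan is to reduce to Lemma~\ref{l:bounded-e} by passing to a subsemigroup on which the maximal subgroup collapses to a point. Since $H_e$ is bounded, I fix $m\in\IN$ with $g^m=e$ for every $g\in H_e$, and I consider
$$N\defeq\{x\in X:xe=e\}.$$
First I would check that $N$ is a subsemigroup: if $x,y\in N$ then $(xy)e=x(ye)=xe=e$. As a subsemigroup of $X$, $N$ is commutative and periodic, and its only idempotent is $e$ (the unique idempotent of $X$, which lies in $N$ since $ee=e$). The point of introducing $N$ is that its maximal subgroup at $e$ is trivial: it is contained in $H_e$, and for $g\in H_e$ one has $ge=g$, so $ge=e$ forces $g=e$; hence $H_e\cap N=\{e\}$.

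The heart of the argument is that $N$ is still unbounded. The first observation is that $x^m\in N$ for every $x\in X$. Indeed, periodicity and unipotence give $X=\korin{\infty}{H_e}$, so by Lemma~\ref{l:C-ideal} the subgroup $H_e$ is an ideal; in particular $xe\in H_e$, whence $(xe)^m=e$, and by commutativity $(xe)^m=x^me^m=x^me$, so $x^me=e$. The second observation is that the index of $x^m$ (the least $k$ with $(x^m)^k\in H_e$) equals $\lceil i/m\rceil$, where $i$ is the index of $x$, because $(x^m)^k=x^{mk}\in H_e$ iff $mk\ge i$. Finally I would note that unboundedness of $X$ forces the indices of its elements to be unbounded: if every $x$ satisfied $x^I\in H_e$ for a single $I$, then $x^{Im}=(x^I)^m=e$ for all $x$, making $X$ bounded. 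Choosing $x$ with index $i>Mm$ therefore yields $x^m\in N$ with index $\lceil i/m\rceil>M$, so $(x^m)^M=x^{mM}\notin H_e$ is not idempotent; as $M$ is arbitrary, $N$ is unbounded.

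With this in hand, $N$ is a periodic commutative semigroup with unique idempotent $e$, trivial maximal subgroup $\{e\}$, and not bounded, so Lemma~\ref{l:bounded-e} applies to $N$ and produces an infinite set $A\subseteq N\subseteq X$ with $AA=\{e\}$, which is exactly the desired conclusion. (As a sanity check, every such $A$ automatically consists of elements outside $H_e$ together with possibly $e$, since $H_e\cap N=\{e\}$, matching the fact that an infinite square-$e$ set cannot live inside the group $H_e$.)

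I expect the main obstacle to be the verification that $N$ is unbounded, rather than the routine algebraic checks. The two points to get right there are that $x^m\in N$ — which rests on the cyclic group part of $\langle x\rangle$ sitting inside $H_e$, so its period divides $m$ and $xe\in H_e$ has $(xe)^m=e$ — and that, once the periods are uniformly bounded by $m$, boundedness of $X$ is governed solely by the indices. Isolating ``$H_e$ is an ideal of $X$'' as a preliminary remark (via Lemma~\ref{l:C-ideal} and $X=\korin{\infty}{H_e}$) is what makes the key identity $x^me=(xe)^m=e$ available, so I would state that first.
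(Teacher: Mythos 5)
Your proof is correct, and it follows the same overall strategy as the paper: reduce to Lemma~\ref{l:bounded-e} by passing to an unbounded subsemigroup of $X$ whose unique idempotent is $e$ and whose maximal subgroup at $e$ is trivial. The difference lies in the choice of that subsemigroup, and in where the work lands. The paper takes $P\defeq\{x^p:x\in X\}$, where $p$ is an exponent of the bounded group $H_e$; for $P$, unboundedness is immediate (if $(x^p)^n$ were idempotent for all $x\in X$, then $X$ would be bounded with exponent $pn$), while triviality of the maximal subgroup is the part that needs the ideal trick from Lemma~\ref{l:C-ideal}: if $x^p\in H_e$, then $xe\in H_e$, hence $x^p=x^pe=(xe)^p=e$. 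You instead take the annihilator $N\defeq\{x\in X:xe=e\}$, for which triviality is the free part ($g\in H_e\cap N$ gives $g=ge=e$), while unboundedness is the part that requires work, namely your observation that $x^m\in N$ for all $x$ plus the index analysis. The two constructions are directly related: your identity $x^me=(xe)^m=e$ shows that the paper's subsemigroup $P$ (with $p=m$) is contained in your $N$, and the witnesses you produce for the unboundedness of $N$ are exactly elements of $P$ of large index; so your argument in effect contains the paper's one-line unboundedness proof, spread over the index bookkeeping. Both routes are sound; the paper's choice concentrates everything in the single identity $x^p=x^pe=(xe)^p=e$ and is correspondingly shorter.
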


\begin{proof} Since $H_e$ is bounded, there exists a number $p\in\IN$ such that $x^p=e$ for all $x\in H_e$. Assuming that $X$ is not bounded, we conclude that the subsemigroup $P=\{x^p:x\in X\}$ of $X$ is not bounded. We claim that $P\cap H_e=\{e\}$. Indeed, for every $x\in X$ with $x^p\in H_e$, we have $xe\in H_e$ by Lemma~\ref{l:C-ideal} and hence $x^p=x^pe=(xe)^p=e$. Since the maximal subgroup of $P$ is trivial, we can apply Lemma~\ref{l:bounded-e} and find an infinite set $A\subseteq P\subseteq X$ such that $AA=\{e\}$. 
\end{proof}

Our final lemma implies Theorem~\ref{t:main1}.

\begin{lemma}\label{l:main1} For  a unipotent commutative semigroup $X$ the following conditions are equivalent:
\begin{enumerate}
\item $X$ is $\C$-closed;
\item $X$ is periodic, nonsingular and group-bounded;
\item $X$ is bounded and not singular.
\end{enumerate}
\end{lemma}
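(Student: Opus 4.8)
The plan is to prove the cycle of implications $(1)\Ra(2)\Ra(3)\Ra(1)$, leaning on Theorem~\ref{t:C-closed} at both ends and on Lemma~\ref{l:bounded-H} for the middle step. The implication $(1)\Ra(2)$ is immediate from Theorem~\ref{t:C-closed}: a $\C$-closed commutative semigroup is chain-finite, periodic, nonsingular and group-bounded, and we simply forget chain-finiteness.

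For $(2)\Ra(3)$ I would argue as follows. The maximal subgroup $H_e$ is a subgroup of $X$, hence bounded because $X$ is group-bounded. Thus $X$ is a periodic commutative unipotent semigroup with bounded maximal subgroup, so if $X$ were not bounded, Lemma~\ref{l:bounded-H} would supply an infinite set $A\subseteq X$ with $AA=\{e\}$, making $X$ singular and contradicting $(2)$. Hence $X$ is bounded, and nonsingularity is inherited directly from $(2)$.

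The substance is in $(3)\Ra(1)$. Assuming $X$ bounded, say $x^n$ idempotent for all $x$, unipotency forces $x^n=e$ for every $x\in X$; this immediately yields periodicity as well as group-boundedness (any subgroup has identity $e$ and exponent dividing $n$). By Theorem~\ref{t:C-closed} it then remains only to verify chain-finiteness, which I would establish by contradiction. Suppose some infinite $I\subseteq X$ satisfies $xy\in\{x,y\}$ for all distinct $x,y\in I$. Declaring $x\prec y$ iff $xy=x$ defines, by commutativity and associativity, a strict linear order on $I$: comparability holds since $xy\in\{x,y\}$, and transitivity is a short computation. By Ramsey's theorem $I$ therefore contains a sequence $(a_i)_{i\in\w}$ of distinct points with $a_ia_j=a_{\min(i,j)}$ for all $i\ne j$, the reversed case $a_{\max(i,j)}$ being entirely symmetric.

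The key step is to feed boundedness into this chain. For $i<j$ we have $a_ia_j=a_i$, whence associativity gives $a_ia_j^k=a_i$ for all $k\ge 1$; taking $k=n$ and using $a_j^n=e$ yields $a_ie=a_i$. Thus $e$ is an identity for each $a_i$, and together with $a_i^n=e$ this places every $a_i$ in the maximal subgroup $H_e$. But inside the group $H_e$ the relation $a_ia_j=a_i$ cancels to $a_j=e$ for all $j>i$, so infinitely many of the distinct points $a_j$ coincide with $e$, a contradiction. This proves chain-finiteness, and Theorem~\ref{t:C-closed} then gives $\C$-closedness, closing the cycle. I expect this final step — extracting the linear order and pushing the chain into $H_e$ — to be the main obstacle, since (notably) it succeeds using boundedness alone, while the remaining implications are essentially bookkeeping over the cited results.
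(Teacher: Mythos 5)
Your proof is correct, and two of its three steps --- $(1)\Ra(2)$ via Theorem~\ref{t:C-closed} and $(2)\Ra(3)$ via Lemma~\ref{l:bounded-H} --- are exactly what the paper does. Where you diverge is in closing the cycle: the paper simply declares the equivalence $(1)\Leftrightarrow(2)$ to be an immediate consequence of Theorem~\ref{t:C-closed}, the hidden point being that chain-finiteness costs nothing for unipotent semigroups. In the paper's definition the two elements $x,y$ of the infinite set $I$ need not be distinct; taking $x=y$ shows that any infinite ``chain'' would consist entirely of idempotents (since $xx\in\{x\}$ forces $x^2=x$), and a unipotent semigroup has only one idempotent, so every chain is a singleton and chain-finiteness holds automatically. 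You instead read the definition as requiring distinct witnesses, which turns chain-finiteness into a genuine assertion, and you prove it honestly: the chain condition yields a strict linear order, Ramsey extracts a monotone $\w$-sequence, boundedness pushes the sequence into $H_e$, and cancellation in the group finishes it. That argument is correct in every detail (transitivity of your order, the computation $a_ia_j^k=a_i$, the membership $a_i\in H_e$ obtained from $a_ie=a_i$ and $a_i^n=e$, and the cancellation step all check out), and it has the merit of being robust under either reading of the definition --- in fact periodicity would already suffice in place of boundedness there. What it costs is effort: your main labor goes into proving something that the paper's definitional convention renders trivial, which is exactly why the paper's proof is three lines while yours is a page.
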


\begin{proof} The equivalence $(1)\Leftrightarrow(2)$ follows from Theorem~\ref{t:C-closed}, and $(3)\Ra(2)$ is trivial. The implication $(2)\Ra(3)$ follows from Lemma~\ref{l:bounded-H}.
\end{proof}

\section{Proof of Theorem~\ref{t:main2}}

Theorem~\ref{t:main2} follows from Lemma~\ref{l:iC1} and \ref{l:iC-single} below.

\begin{lemma}\label{l:iC1} If a unipotent semigroup $X$ is injectively $\C$-closed, then its center $Z(X)$ is bounded, nonsingular, and group-finite.
\end{lemma}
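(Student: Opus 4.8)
The plan is to deduce the statement entirely from the three characterizations already available, treating the center as a commutative unipotent subsemigroup. First I would apply Theorem~\ref{t:center} to the injectively $\C$-closed semigroup $X$: this immediately gives that the center $Z(X)$ is chain-finite, periodic, nonsingular, and group-finite. Two of the three desired conclusions (nonsingularity and group-finiteness) are thus already in hand, and the injectivity hypothesis is exactly what supplies group-finiteness rather than merely group-boundedness. What remains is to show that $Z(X)$ is bounded.

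For boundedness I would first observe that group-finite trivially implies group-bounded, since a finite group $G$ satisfies $x^{|G|}=e$ for all $x\in G$. Hence $Z(X)$ is a commutative semigroup that is chain-finite, periodic, nonsingular, and group-bounded, so by Theorem~\ref{t:C-closed} it is $\C$-closed.

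Next I would check that $Z(X)$ is unipotent, so that Lemma~\ref{l:main1} applies. If $Z(X)=\emptyset$, then it is vacuously bounded (as well as nonsingular and group-finite), so assume $Z(X)\ne\emptyset$. Being periodic, $Z(X)$ contains an idempotent; any idempotent of $Z(X)$ is an idempotent of $X$, so by unipotency of $X$ it coincides with the unique idempotent $e$ of $X$. Therefore $Z(X)$ is a unipotent commutative semigroup, and being $\C$-closed it is bounded by the equivalence $(1)\Leftrightarrow(3)$ of Lemma~\ref{l:main1}. Combining, $Z(X)$ is bounded, nonsingular, and group-finite, as required.

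This argument is a short chain through the cited results rather than a genuinely hard computation, so there is no single analytic obstacle. The only steps demanding care are the verification that $Z(X)$ is unipotent---which hinges on periodicity forcing an idempotent into the center and on uniqueness of the idempotent in $X$ descending to the subsemigroup $Z(X)$---together with the explicit handling of the degenerate case $Z(X)=\emptyset$; both are routine but necessary.
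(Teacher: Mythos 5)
Your proof is correct and follows essentially the same route as the paper: apply Theorem~\ref{t:center}, handle the case $Z(X)=\emptyset$, use periodicity and unipotency of $X$ to see that $Z(X)$ is a unipotent commutative semigroup, and then invoke Lemma~\ref{l:main1} to obtain boundedness. Your intermediate detour through Theorem~\ref{t:C-closed} to first establish $\C$-closedness of $Z(X)$ is harmless but unnecessary, since Lemma~\ref{l:main1} already lets you pass directly from condition (2) (periodic, nonsingular, group-bounded) to condition (3) (bounded).
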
 

\begin{proof} By Theorem~\ref{t:center}, the semigroup $Z(X)$ is periodic, nonsingular and group-finite. If $Z(X)$ is empty, then $Z(X)$ is bounded. If $Z(X)$ is not empty, then by the periodicity, $Z(X)$ contains an idempotent and hence is unipotent, being a subsemigroup of the unipotent semigroup $X$. By Lemma~\ref{l:main1}, $Z(X)$ is bounded.
\end{proof}

\begin{lemma}\label{l:iC-single} Every bounded nonsingular group-finite unipotent commutative subsemigroup $X$ of a $T_1$ topological semigroup $Y$ is closed and discrete in $Y$.
\end{lemma}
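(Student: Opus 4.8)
The plan is to prove the equivalent statement that $X$ has \emph{no accumulation point} in $Y$; since $Y$ is $T_1$, this is precisely what ``closed and discrete'' means. First I would record the structural consequences of the hypotheses. As $X$ is bounded and unipotent with idempotent $e$, there is $n\in\IN$ with $x^n=e$ for every $x\in X$. The power map $z\mapsto z^n$ is continuous and $\{e\}$ is closed in the $T_1$ space $Y$, so $z^n=e$ for every $z\in\overline{X}$; hence $\overline{X}$ is again bounded and its only idempotent is $e$. Group-finiteness makes $H_e=eX$ a finite group, so $H_e$ is closed and discrete in $Y$, and continuity of the translations $z\mapsto ez$, $z\mapsto ze$ together with $eX=Xe=H_e$ gives $e\overline{X}=\overline{X}e=H_e$. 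In particular $z^m=z^{m-n}e\in\overline{X}e=H_e$ for every $z\in\overline{X}$ and every $m\ge n$.

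The engine of the proof is a \emph{pinning principle}. Suppose $y\in Y$ is an accumulation point of $X$ and fix an infinite $A\subseteq X$ clustering at $y$. Continuity of multiplication forces the pairwise products of $A$ to cluster at $y^2$: for every neighbourhood $N$ of $y^2$ there is an infinite $A'\subseteq A$ with $A'A'\subseteq N$. Moreover, if along some infinite subset the products take only finitely many values, then an application of the infinite Ramsey theorem (to the off-diagonal products) and the Pigeonhole Principle (to the squares) produce an infinite $A''\subseteq A$ on which \emph{all} products, diagonal and off-diagonal, are constant. Such a constant net is eventually constant and must converge to $y^2$; since eventually constant nets have unique limits in a $T_1$ space, every such constant value equals $y^2$, whence $A''A''=\{y^2\}$ and $X$ is singular — a contradiction. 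Thus whenever the products can be made finitely valued, we are done; in every other case the product set $A'A'$ is infinite and clusters at $y^2$, so $y^2$ is itself an accumulation point. Iterating, $y^{2^k}$ is an accumulation point for all $k$, and since $y^{2^k}\in H_e$ once $2^k\ge n$, the accumulation is funnelled into the finite group $H_e$.

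It therefore remains to rule out an accumulation point lying in $H_e$; after translating into the subsemigroup $P=\{z\in X:ez=e\}$, on which $e$ acts as a zero, this is the case of accumulation at the idempotent $e$. This is the \textbf{main obstacle}. Here $y^2=e$, so the descent via squaring stalls, and the products of a clustering set may well form an infinite set (again clustering at $e$), so the pinning principle does not close the argument on its own. The decisive step is a combinatorial one: I would show that an infinite set $A\subseteq P$ clustering at $e$, after passing to an infinite subset on which $ex$ and the squares $x^2$ are constant (possible since $H_e$ is finite and discrete), must contain an infinite subset all of whose pairwise products equal $e$. This is where nonsingularity is used in full strength — it is exactly a strong almost-disjointness condition forbidding infinite families of ``pairwise meeting'' elements — and where group-finiteness is indispensable, since it is what confines the analysis to the finite kernel $H_e$ and to its preimage $P$. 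Once such a subset is found, its product set is the singleton $\{e\}$, contradicting nonsingularity and completing the proof; I expect this sunflower/Ramsey-type argument at the idempotent to be the hardest and most delicate part.
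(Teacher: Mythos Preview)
Your proposal identifies the right overall shape --- push the problem toward the finite group $H_e$ by repeated squaring and then invoke nonsingularity --- but it has a genuine gap exactly where you flag it. The ``pinning principle'' and the Ramsey step are fine: if some infinite $A'$ clustering at $y$ has $A'A'$ finite, then Ramsey plus the $T_1$ argument forces $A''A''=\{y^2\}$ and you are done. The iteration $y\mapsto y^2$ is also fine until you land in $H_e$. But once the accumulation point is $e$ (or any $h\in H_e$), the dichotomy becomes vacuous: in the ``infinite products'' branch you only learn that $e$ is again an accumulation point, which you already knew, and squaring no longer moves you. Your proposed way out --- a ``sunflower/Ramsey-type argument'' producing an infinite subset with all products equal to $e$ --- is precisely the statement that $X$ is singular, so it cannot be \emph{derived} from nonsingularity; you would need some additional structural input to force the products into a finite set, and you have not supplied one. (Your description of nonsingularity as ``a strong almost-disjointness condition forbidding infinite families of pairwise meeting elements'' is evocative but not an argument.)

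The paper avoids this impasse by working with the filtration $\korin{k}{H_e}=\{x\in X:x^k\in H_e\}$ rather than iterating squares on a single accumulation point. The key technical step is an extremal argument: one shows that for every $x\in X$ and $y\in Y$ there is a neighbourhood $U$ of $y$ with $x(U\cap X)$ finite, by taking the largest $k$ for which this holds for all $x\in\korin{k}{H_e}$ and deriving a contradiction from a counterexample $b\in\korin{k+1}{H_e}$ via $b^2\in\korin{k}{H_e}$. This upgrades to ``$xV$ is a singleton'' and then ``$xV^n$ is a singleton'', after which a straightforward induction on $k$ (base case: $\korin{1}{H_e}=H_e$ is finite) shows each $\korin{k}{H_e}$ is discrete and closed in $Y$; boundedness gives $X=\korin{k}{H_e}$ for some $k$. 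The point is that the induction on $k$ replaces your stalled squaring iteration: at the inductive step one uses that $y^2\in\korin{k-1}{H_e}$ is already isolated, and then the ``$xV$ is a singleton'' claim lets one build an infinite $A$ with $AA=\{y^2\}$ directly, with no Ramsey and no unresolved endgame at the idempotent.
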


\begin{proof}  Replacing $Y$ by the closure of $X$, we can assume that $X$ is dense in $Y$.

\begin{claim}\label{l:Pe-ideal2} For every $x\in X$ and $y\in Y$ there exists a neighborhood $U\subseteq Y$ of $y$ such that the set $x(U\cap X)$ is finite.
\end{claim}

\begin{proof}  To derive a contradiction, assume that there exists $a\in X$ and $y\in Y$ such that for every neighborhood $U\subseteq Y$ of $y$ the set $a(U\cap X)$ is infinite. The periodicity of $X$ ensures that $X=\bigcup_{\ell\in\IN}\korin{\ell}{H_e}$ where $\korin{\ell}{H_e}=\{x\in X:x^\ell\in H_e\}\subseteq \korin{\ell+1}{H_e}$, see Lemma~\ref{l:C-ideal}. This lemma also implies that the set $$\korin{1}{H_e}\cdot Y=H_eY=H_e\overline{X}\subseteq\overline{H_eX}=\overline{H_e}=H_e$$is finite.

Let $k$ be the largest number such that for every $x\in \korin{k}{H_e}$ there exists a neighborhood $U\subseteq Y$ of $y$ such that the set $x(U\cap X)$ is finite.  Since $1\le k<\min\{\ell:a\in \korin{\ell}{H_e}\}$, the number $k$ is well-defined.

\begin{subclaim}\label{sc:improve} For every $x\in \korin{k}{H_e}$ there exists a neighborhood $V\subseteq Y$ of $y$ such that the set $xV$ is a singleton in $X$.
\end{subclaim}

\begin{proof} By the choice of $k$, for every $x\in \korin{k}{H_e}$ there exists a neighborhood $U\subseteq Y$ of $y$ such that the set $x(U\cap X)$ is finite and hence closed in the $T_1$-space $Y$. Then
$$xy\in xU\subseteq x\overline{(U\cap X)} \subseteq \overline{x(U\cap X)}=x(U\cap X)\subseteq X.$$
Since the space $Y$ is $T_1$ there exists an open neighborhood $W$ of $xy$ such that $W\cap x(U\cap X)=\{xy\}$.
 By the continuity of the semigroup operation, the point $y$ has an open neighborhood $V\subseteq U$ such that $xV\subseteq W$.
% $xV\cap x(U\cap X)=\{xy\}\subseteq X$.
Then $$xV=x\overline{V\cap X}\subseteq\overline{x(V\cap X)}\subseteq\overline{xV\cap x(U\cap X)}=\overline{\{xy\}}=\{xy\}\subseteq X.$$
\end{proof}

By the maximality of $k$, there exists $b\in \korin{k+1}{H_e}$ such that for every neighborhood $V\subseteq Y$ of $y$ the set $b(V\cap X)$ is infinite. It follows from $b\in \korin{k+1}{H_e}$ that $b^{k+1}\in H_e$ and hence $(b^2)^{k}=b^{2k}=b^{k+1}b^{k-1}\in H_eX^1\subseteq H_e$ and hence $b^2\in \korin{k}{H_e}$. By Subclaim~\ref{sc:improve}, there exists a neighborhood $U\subseteq Y$ of $y$ such that the set $b^2U$ is a singleton in $X$. Choose any $u\in U\cap X$ and applying Lemma~\ref{l:C-ideal}, conclude that $(b^2u)^k=b^{2k}u^k\in H_eX\subseteq H_e$ and $b^2y=b^2u\in \korin{k}{H_e}$. By Subclaim~\ref{sc:improve}, there exists a neighborhood $V\subseteq U$ such that $(b^2y)V$ is a singleton in $X$. Then the set $A=b(V\cap X)$ is infinite but $$AA\subseteq b^2VV\subseteq b^2UV=(b^2y)V$$ is a singleton. But this contradicts the nonsingularity of $X$.
\end{proof}

\begin{claim}\label{l:improve2} For every $x\in X$ and $y\in Y$ there exists a neighborhood $V\subseteq Y$ of $y$ such that $xV$ is a singleton in $X$.
\end{claim}

\begin{proof}
By Claim~\ref{l:Pe-ideal2}, there exists a neighborhood $U\subseteq Y$ of $y$ such that the set $x(U\cap X)$ is finite and hence closed in the $T_1$-space $Y$. Then
$$xy\in xU\subseteq x\overline{(U\cap X)}\subseteq \overline{x(U\cap X)}=x(U\cap X)\subseteq X.$$
Since the space $Y$ is $T_1$, there exists an open neighborhood $W$ of $xy$ such that $W\cap x(U\cap X)=\{xy\}$.
 By the continuity of the semigroup operation, the point $y$ has an open neighborhood $V\subseteq U$ such that $xV\subseteq W$.
%and by the continuity of the semigroup operation, there exists a neighborhood $V\subseteq U$ of $y$ such that $xV\cap x(U\cap X)=\{xy\}\subseteq X$.
Then
$$xV\subseteq x\overline{V\cap X}\subseteq\overline{x(V\cap X)}\subseteq\overline{ xV\cap x(U\cap X)}=\overline{\{xy\}}=\{xy\}\subseteq X.$$
\end{proof}

\begin{claim}\label{l:improve3} For every $n\in\IN$, $x\in X$ and $y\in Y$ there exists a neighborhood $V\subseteq Y$ of $y$ such that $xV^n$ is a singleton in $X$.
\end{claim}

\begin{proof} For $n=1$ the statement follows from Claim~\ref{l:improve2}. Assume that for some $n\in\IN$ we know that
for every $x\in X$ and $y\in Y$ there exists a neighborhood $U\subseteq Y$ of $y$ such that $xU^n$ is a singleton $\{a\}$ in $X$. By Claim~\ref{l:improve2}, there exists a neighborhood $V\subseteq U$ of $y$ such that $aV$ is a singleton in $X$. Then $xV^{n+1}\subseteq xV^nV=aV$ is a singleton in $X$.
\end{proof}

\begin{claim}\label{l:Xk-discrete} For every $k\in\IN$ the subspace $\korin{k}{H_e}\defeq\{x\in X:x^k\in H_e\}$ of $Y$ is discrete.
\end{claim}

\begin{proof} To derive a contradiction, assume that for some $k\in\IN$ the subspace $\korin{k}{H_e}$ is not discrete and let $k$ be the smallest number with this property. Since $\korin{1}{H_e}=H_e$ is finite, $k>1$. Let $y$ be a non-isolated point of $\korin{k}{H_e}$. It follows that $y^{k}\in H_e$ and $(y^2)^{k-1}=y^{k}y^{k-2}\in H_eX^1\subseteq H_e$ and hence  $y^2\in\korin{k-1}{H_e}$. By the minimality of $k$, the space $\korin{k-1}{H_e}$ is discrete. By the continuity of the semigroup operation, there exists a neighborhood $V_0\subseteq Y$ of $y$ such that $V_0V_0\cap \korin{k-1}{H_e}=\{y^2\}$. By Claim~\ref{l:improve2}, we can additionally assume that $V_0y=\{y^2\}$.

By induction we shall construct a sequence of points $(x_n)_{n\in\w}$ in $\korin{k}{H_e}$ and a decreasing sequence  $(V_n)_{n\in\w}$ of open sets in $Y$ such that for every $n\in\w$ the following conditions are satisfied:
\begin{enumerate}
\item[(i)] $x_n\in V_{n}\cap \korin{k}{H_e}\setminus\{x_i\}_{i<n}$;
\item[(ii)] $y\in V_{n+1}\subseteq V_{n}$ and $x_nV_{n+1}=\{y^2\}$.
\end{enumerate}

Assume that for some $n\in\w$ we have chosen a neigborhood $V_n$ of $y$ and a sequence of points $\{x_i\}_{i<n}$. Since $y$ is a non-isolated point of $\korin{k}{H_e}$, there exists a point $x_n$ satisfying the inductive condition (i). Observe that $x_ny\in V_0y=\{y^2\}$. By Claim~\ref{l:improve2}, there exists a neighborhood $V_{n+1}\subseteq V_n$ of $y$ such that  $x_nV_{n+1}=\{x_ny\}=\{y^2\}$. This completes the inductive step.
\smallskip

After completing the inductive construction, we obtain the infinite set $A\defeq\{x_n\}_{n\in\w}\subseteq X_{k}$ such that $AA=\{y^2\}$. But this contradicts the nonsingularity of $X$.
\end{proof}

\begin{claim}\label{l:closed2} For every $k\in\IN$ the set $\korin{k}{H_e}$ is closed in $Y$.
\end{claim}

\begin{proof} To derive a contradiction, assume that for some $k$ the set $\korin{k}{H_e}$ is not closed in $Y$. We can assume that $k$ is the smallest number with this property. Since $\korin{1}{H_e}=H_e$ is finite, $k>1$ and hence $\{x^2:x\in \korin{k}{H_e}\}\subseteq \korin{k-1}{H_e}$.  Fix any point $y\in\overline{\korin{k}{H_e}}\setminus \korin{k}{H_e}$ and observe that
$$y^2\in\overline{\{x^2:x\in \korin{k}{H_e}\}}\subseteq\overline{\korin{k-1}{H_e}}=\korin{k-1}{H_e} \subseteq \korin{k}{H_e},$$see Lemma~\ref{l:C-ideal}. By Claim~\ref{l:Xk-discrete}, the space $\korin{k}{H_e}$ is discrete. Consequently, there exists a neighborhood $U\subseteq Y$ of $y$ such that $UU\cap \korin{k}{H_e}=\{y^2\}$. Since $y\in\overline{\korin{k}{H_e}}\setminus \korin{k}{H_e}$, the set $A=U\cap \korin{k}{H_e}$ is infinite and $AA\subseteq UU\cap \korin{k}{H_e}=\{y^2\}$, which contradicts the nonsingularity of $X$.
\end{proof}

The boundedness of $X$ implies that $X=\korin{k}{H_e}$ for some $k\in \IN$. By Claims~\ref{l:Xk-discrete} and \ref{l:closed2}, the set $\korin{k}{H_e}=X$ is closed and discrete in $Y$.
\end{proof}

\end{document}